\documentclass{math}
\usepackage{hyperref}
\hypersetup{
colorlinks=true,
urlcolor=blue,
citecolor=blue}
\usepackage[all]{xy,xypic}
\usepackage{amsfonts,amssymb,amsmath,amsgen,amsopn,amsbsy,theorem,graphicx,epsfig}
\usepackage{eufrak,amscd,bezier,latexsym,mathrsfs,enumerate,multirow}
\usepackage[utf8]{inputenc}\usepackage[english]{babel}
\usepackage[dvipsnames]{xcolor}
\usepackage[pagewise]{lineno}
%\linenumbers

\yil{}
\vol{}
\fpage{}
\lpage{}
\doi{}

\title{A generalization of the notion of helix}

\author[LUCAS and ORTEGA-YAGÜES]{
\textbf{Pascual LUCAS$^{1}$\thanks{Correspondence: plucas@um.es}, José-Antonio ORTEGA-YAGÜES$^{2}$}\\
$^{1}$Departamento de Matemáticas, Universidad de Murcia, Campus de Espinardo, 30100 Murcia, Spain,\\ ORCID iD: https://orcid.org/0000-0002-4354-9736\\
$^{2}$Departamento de Matemáticas, Universidad de Murcia, Campus de Espinardo, 30100 Murcia, Spain,\\ ORCID iD: https://orcid.org/0000-0001-9521-1051\\ [1.8em]

\rec{.201}
\acc{.201}
\finv{..201}
}

\amssayisi{2010 {\itshape AMS Mathematics Subject Classification:} 53A04
\newline
\vspace{-15mm}
\begin{center}
\raisebox{-17ex}[0ex][0ex]{~~ \raisebox{.5ex}[0ex][0ex]{\footnotesize  This work is licensed under a Creative Commons Attribution 4.0 International License.}}
\end{center}}

\newcommand{\bc}{\begin{center}}
\newcommand{\ec}{\end{center}}

\numberwithin{equation}{section}

\newcommand{\al}{\alpha}

\renewcommand{\phi}{\varphi}

\newtheorem{theo}{Theorem}[section]
\newtheorem{prop}[theo]{Proposition}
\newtheorem{defi}[theo]{Definition}

\def\svf{$F$-constant}
\def\<{\left<}
\def\>{\right>}
\def\R#1{\ensuremath{\mathbb{R}^{#1}}}
\def\X{\ensuremath{\mathfrak{X}}}

\setcounter{page}{1}

\begin{document}

\maketitle

\begin{abstract}
In this paper we generalize the notion of helix in the three-dimensional Euclidean space, which we define as that curve $\alpha$ for which there is an \svf{} vector field $W$ along $\alpha$ that forms a constant angle with a fixed direction $V$ (called an axis of the helix). We find the natural equation and the geometric integration of helices $\alpha$ where the \svf{} vector field $W$ is orthogonal to its axis.

\keywords{helix; osculating helix; normal helix; rectifying helix; Darboux vector; \svf{} vector field}
\end{abstract}

\section{Introduction}
\label{s:setup}

Let $\alpha:I\to\R3$ be a differentiable unit speed curve and let $\X(\alpha)$ denote the set of differentiable vector fields along the curve. Let $F(s)=\{F_1(s),F_2(s),F_3(s)\}$ be a moving orthonormal frame along $\alpha$, so we can see $F$ as a differentiable map $F:I\to SO(3)$. It is easy to show (see e.g. \cite[pp. 43--45]{Kreyszig}) that there exists a unique vector field $D_F(s)$ along $\alpha$ satisfying the equations
\begin{equation}\label{D-eqs}
F_1'=D_F\times F_1,\quad F_2'=D_F\times F_2,\quad F_3'=D_F\times F_3,
\end{equation}
where $()'$ is the usual derivative in $\R3$ and $\times$ stands for the cross product. The vector field $D_F$ is called the \emph{Darboux vector associated to the frame $F$}.

From a physical point of view, along the curve $\alpha$ we have two coordinate systems associated to two references: (a) One associated to the frame $F$, that may be imagined as being fixed on the curve. This system rotates and is thus accelerating, it is a non-inertial frame. (b) The other is the canonical rectangular coordinates $(x_1,x_2,x_3)$ in $\R3$, associated to the usual canonical basis $\{e_1,e_2,e_3\}$, that is fixed to the space and is an inertial frame. Obviously, given a vector field $W$ along $\alpha$, the variation in $s$ of its coordinate functions in the two coordinate systems is not the same. In fact, they are related by the following equation:
\[
W'=\frac{d_r}{ds}(W)+D_F\times W,
\]
where $\frac{d_r}{ds}(W)$ denotes the rate of change of $W$ as observed in the rotating coordinate system (i.e., in the frame $F$). The above equation is usually called the \emph{Transport Theorem} in analytical dynamics (see \cite[p. 11]{SJ}). If we think of $W$ as a curve in the 3-space, then the above formula tells us that its (absolute) velocity in the inertial frame $\{e_1,e_2,e_3\}$ is equal to the velocity relative to the moving (rotating) frame $\{F_1,F_2,F_3\}$ plus the velocity of the rotating coordinate system itself.

Motivated by the equations (\ref{D-eqs}) and the Transport Theorem, we introduce the following definition.
\begin{defi}\label{F-constant}
A vector field $W$ along $\al$ is said to be constant with respect to the frame $F$ (or \emph{\svf{} vector field}) if $\frac{d_r}{ds}(W)=0$ (or, equivalently, $W'=D_F\times W$).
\end{defi}
The set of \svf{} vector fields along $\alpha$ will be denoted by $\X_F(\alpha)$. In \cite{Santalo67}, this kind of vector fields are said to be invariably attached to every point of the curve. If we think of the curve $\alpha$ as the trajectory of a rigid body, then a \svf{} vector can be imagined as a body-fixed vector, \cite{SJ}. The following result is a straightforward computation.

\begin{prop}
The following properties of \svf{} vector fields hold:\vspace{-\topsep}
\begin{enumerate}\itemsep0pt
\def\labelenumi{$\theenumi)$}
\item If $W$ is a \svf{} vector field, then $W$ has constant length.
\item If $W_1$ and $W_2$ are \svf{} vector fields, then $W_1+W_2$ is a \svf{} vector field.
\item Let $W$ be a nowhere zero \svf{} vector field and $h$ a differentiable function, then $hW$ is a \svf{} vector field if and only if $h$ is a constant function.
\item $W$ is a \svf{} vector field if and only if $W=a_1F_1+a_2F_2+a_3F_3$, for certain constants $a_i\in\R{}$. Hence, $\X_F(\alpha)$ is a 3-dimensional real vector space.
\end{enumerate}
\end{prop}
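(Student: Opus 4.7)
The plan is to verify each of the four properties directly from the defining equation $W'=D_F\times W$, exploiting the elementary algebraic properties of the cross product and the fact that $\{F_1,F_2,F_3\}$ is a pointwise orthonormal basis of $\mathbb{R}^3$.

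For property 1, I would differentiate $|W|^2=\langle W,W\rangle$ and substitute $W'=D_F\times W$; the inner product $\langle D_F\times W,W\rangle$ vanishes identically because $D_F\times W\perp W$, so $|W|$ is constant. Property 2 is immediate from the linearity of both the derivative and the cross product in the second slot: $(W_1+W_2)'=W_1'+W_2'=D_F\times W_1+D_F\times W_2=D_F\times(W_1+W_2)$. For property 3, I would expand $(hW)'=h'W+hW'=h'W+D_F\times(hW)$ using property~3 of the product rule and the bilinearity of $\times$; comparing with the defining equation shows $hW$ is $F$-constant iff $h'W\equiv 0$, and since $W$ is nowhere zero this forces $h'\equiv 0$.

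For property 4, the ``if'' direction is a one-line calculation: writing $W=\sum_i a_iF_i$ with constant coefficients, the Darboux equations \eqref{D-eqs} give $W'=\sum_i a_i(D_F\times F_i)=D_F\times W$. For the converse, at each $s$ I would decompose $W(s)=\sum_i a_i(s)F_i(s)$, differentiate to obtain $W'=\sum_i a_i'F_i+\sum_i a_iF_i'=\sum_i a_i'F_i+D_F\times W$, and compare with $W'=D_F\times W$ to conclude $a_i'\equiv 0$, hence each $a_i$ is constant. The dimension statement then follows because the evaluation map $\mathfrak{X}_F(\alpha)\to\mathbb{R}^3$, $W\mapsto(a_1,a_2,a_3)$, is a linear isomorphism onto $\mathbb{R}^3$.

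None of the steps is genuinely an obstacle: the proof is almost entirely bookkeeping once one notices that differentiating a vector field decomposed in the frame $F$ always produces, thanks to \eqref{D-eqs}, exactly the term $D_F\times W$ plus the derivatives of the coordinate functions. The only mild care needed is in property 3, where the hypothesis that $W$ be nowhere zero is used to cancel $W$ from the scalar equation $h'W=0$; without that hypothesis, $h$ could fail to be constant on open subintervals where $W$ vanishes.
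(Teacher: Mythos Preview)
Your proof is correct and is exactly the kind of direct verification the paper has in mind; in fact, the paper does not spell out a proof at all, merely stating that ``the following result is a straightforward computation.'' Your argument fills in precisely those computations, using nothing beyond the defining equation $W'=D_F\times W$, the Darboux relations \eqref{D-eqs}, and the orthonormality of the frame.
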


A special moving frame is the Frenet frame $F=\{T_\alpha,N_\alpha,B_\alpha\}$, satisfying the well-known Frenet-Serret equations
\begin{equation}\label{FS-eq}
T_\alpha'(s)=\kappa_\al(s)\,N_\alpha(s),\quad
N_\alpha'(s)=-\kappa_\al(s)\,T_\alpha(s)+\tau_\al(s)\,B_\alpha(s),\quad
B_\alpha'(s)=-\tau_\al(s)\,N_\alpha(s),
\end{equation}
where $\kappa_\alpha$ and $\tau_\alpha$ stand for the curvature and torsion functions, respectively. The vector fields $T_\alpha,N_\alpha,B_\alpha$ are trivial examples of \svf{} vector fields. Throughout this paper we will assume that our curves are non-planar, i.e., with non-zero torsion. For this frame, the Darboux vector is simply denoted by $D_\al$ and is given by $D_\al=\tau_\alpha T_\alpha+\kappa_\alpha B_\alpha$. This vector can be interpreted as the angular velocity of the Frenet frame as a whole. In fact, the rate of change of the frame $\{T_\alpha,N_\alpha,B_\alpha\}$ with $s$ can be characterized as an instantaneous rotation about the vector $D_\al$, with angular velocity equal to the \emph{total curvature} specified by
\[
\omega=|D_\al|=\sqrt{\kappa_\alpha^2+\tau_\alpha^2}.
\]

Let $W$ be a nonzero differentiable vector field along the curve $\alpha$. $W$ is said to be a normal vector field if $W(s)$ belongs to the normal plane for every $s$; similarly, we have rectifying or osculating vector fields, depending on whether $W(s)$ belongs to the rectifying or osculating plane, respectively.

\def\scur{}
The term helix (or general helix, or cylindrical helix, or curve of constant slope) has traditionally been used to define curves whose tangent vector field $T_\al$ forms a constant angle with a fixed direction in the 3-space, \cite[p. 33]{Struik61}. The concept of helix has been extended by considering vector fields other than the tangent vector field $T_\al$, such as the principal normal vector field $N_\al$ (thus giving rise to slant helices, \cite{IT04,LO16a}). In this paper, we propose a new extension of the concept by considering \svf{} vector fields.
\begin{defi}\label{helix}
A curve $\alpha$ is said to be a \emph{helix} if there exists a \svf{} vector field $W$ along $\alpha$ that forms a constant angle with a fixed direction $V$, called an \emph{axis} of the helix.
\end{defi}
Without loss of generality, we can assume that $W$ is a unit vector field. In the particular case that $W$ is a normal (osculating or rectifying, resp.) vector field then $\alpha$ is called a \scur{} normal (osculating or rectifying, resp.) helix. Note that we recover the notion of cylindrical helix or slant helix when the \svf{} vector field $W$ is given by $T_\al$ or $N_\al$, respectively. Note also that the term osculating helix has been previously used to refer to the circular helix passing through a point of a curve, having the same tangent, curvature vector and torsion, \cite[p. 42]{Struik61}.

In this paper, we solve the following problem:
\begin{quote}
\itshape
How are characterized the helices $\alpha$ when the \svf{} vector field $W$ is orthogonal to its axis $V$?
\end{quote}
Note that when the vector field $W$ is $T_\alpha$, $N_\alpha$ or $B_\alpha$, then the curve $\alpha$ is nothing but a plane curve or a cylindrical helix. Therefore, in the following sections we will address the question when the vector field $W$ is expressed as a linear combination of at least two vector fields of the Frenet frame.

\section{Normal helices}

\subsection{Natural equation of normal helices}
\label{s:nh1}

Let $\alpha$ be a nonplanar curve in $\R3$ with Frenet apparatus $\{\kappa_\alpha,\,\tau_\alpha;T_\alpha,N_\alpha,B_\alpha\}$, and assume that $\alpha$ is a normal helix with axis $V$, $V$ being a constant vector. Suppose that there is a nonzero constant angle $\theta\in(-\pi/2,\pi/2)$ such that $W=\cos\theta\, N_\alpha+\sin\theta\,B_\alpha$ is orthogonal to $V$. Hence we can write
\begin{equation}\label{V}
V=\lambda\,T_\alpha+\mu(\sin\theta\, N_\alpha-\cos\theta\,B_\alpha),
\end{equation}
for certain differentiable functions $\lambda$ and $\mu$. By taking derivative in (\ref{V}) we get
\begin{align}
\lambda'-\mu\sin\theta\,\kappa_\alpha &=0, \label{eqV'1}\\
\sin\theta\,\mu'+\lambda\,\kappa_\alpha+\mu\cos\theta\,\tau_\alpha &=0, \label{eqV'2}\\
-\cos\theta\,\mu'+\mu\sin\theta\,\tau_\alpha &=0.\label{eqV'3}
\end{align}
From equation (\ref{eqV'3}) we have
\begin{equation}\label{mu}
\mu=e^{\tan\theta\,\int\tau_\alpha},
\end{equation}
that jointly with (\ref{eqV'2}) leads to
\begin{equation}\label{a}
\lambda=-\sec\theta\,\rho\, e^{\tan\theta\,\int\tau_\alpha},
\end{equation}
where $\rho=\tau_\alpha/\kappa_\alpha$ is called the Lancret curvature. Finally, putting equations (\ref{mu}) and (\ref{a}) in (\ref{eqV'1}) yields
\begin{equation}\label{8'}
-\sec\theta(\rho'+\tan\theta\,\tau_\alpha\,\rho)=\sin\theta\,\kappa_\alpha,
\end{equation}
and then
\begin{equation}\label{8''}
-\rho'=\sin\theta\cos\theta\,\kappa_\alpha+\tan\theta\,\tau_\alpha\,\rho.
\end{equation}
Therefore
\begin{equation}\label{ecnaturalnormal}
  \frac{\kappa_\alpha}{\cos^2\theta\,\kappa_\alpha^2+\tau^2_\alpha}\left(\frac{\tau_\alpha}{\kappa_\alpha}\right)'=-\tan\theta.
\end{equation}
Conversely, let $\alpha$ be a curve in $\R{3}$ satisfying equation (\ref{ecnaturalnormal}), for a nonzero constant $\theta\in(-\pi/2,\pi/2)$. Let $V$ be the vector field given in (\ref{V}), where $\mu$ and $\lambda$ are defined by equations (\ref{mu}) and (\ref{a}), respectively. Then equations (\ref{eqV'2}) and (\ref{eqV'3}) are satisfied. On the other hand, from equation (\ref{ecnaturalnormal}) we easily get (\ref{8'}), and then we deduce (\ref{eqV'1}) since
\[
\mu\sin\theta\;\kappa_\al=-\sec\theta\big(\rho'\mu+\tan\theta\,\tau_\alpha\,\rho\,\mu\big)=\lambda'.
\]
Hence, there is a constant $\theta$ such that the vector field $\cos\theta\,N_\alpha+\sin\theta\,B_\alpha$ is orthogonal to a constant direction $V$, so $\alpha$ is a normal helix.

Summing up, we have shown the following result (we also include the case $\theta=0$).
\begin{theo}
Let $\alpha$ be a nonplanar arclength parametrized curve in $\R3$, with curvature $\kappa_\alpha>0$ and torsion $\tau_\alpha$. Then $\alpha$ is a normal helix (with $W$ orthogonal to $V$) if and only if the following equation holds
%\begin{equation}\label{ecnaturalnormalth}
\[
\frac{\kappa_\alpha}{\cos^2\theta\,\kappa_\alpha^2+\tau^2_\alpha}\left(\frac{\tau_\alpha}{\kappa_\alpha}\right)'=-\tan\theta,
\]
%\end{equation}
for a certain constant $\theta\in(-\pi/2,\pi/2)$.
\end{theo}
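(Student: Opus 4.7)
The plan is to treat both implications by expressing the constant axis $V$ in a basis adapted to the vector field $W=\cos\theta\,N_\alpha+\sin\theta\,B_\alpha$ and then translating the condition ``$V$ is constant'' into the Frenet--Serret framework.

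For the direct implication, I would start from the hypothesis that $W$ is orthogonal to $V$. Since $\{T_\alpha,W,\sin\theta\,N_\alpha-\cos\theta\,B_\alpha\}$ is an orthonormal frame along $\alpha$ and the $W$-component of $V$ vanishes, I can write $V=\lambda\,T_\alpha+\mu(\sin\theta\,N_\alpha-\cos\theta\,B_\alpha)$ for unique smooth functions $\lambda,\mu$. Differentiating along $s$ and using the Frenet--Serret equations \eqref{FS-eq} produces a system of three scalar ODEs, one for each of the coefficients of $T_\alpha$, $N_\alpha$, $B_\alpha$. The equation coming from the $B_\alpha$-coefficient is decoupled and yields $\mu=\exp(\tan\theta\int\tau_\alpha)$; substituting into the $N_\alpha$-equation gives $\lambda$ in closed form in terms of the Lancret curvature $\rho=\tau_\alpha/\kappa_\alpha$; and the $T_\alpha$-equation then becomes the compatibility condition, which, after dividing through by $\cos^2\theta\,\kappa_\alpha^2+\tau_\alpha^2$, reduces to the natural equation \eqref{ecnaturalnormal}.

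For the converse, I would start from a curve satisfying \eqref{ecnaturalnormal} for some constant $\theta\in(-\pi/2,\pi/2)$, define $\mu$ and $\lambda$ by the explicit formulas \eqref{mu} and \eqref{a}, and set $V$ as in \eqref{V}. Then the $N_\alpha$- and $B_\alpha$-equations hold automatically by construction, and the $T_\alpha$-equation follows by reversing the algebraic manipulation that led to \eqref{ecnaturalnormal}. This shows $V'=0$, so $V$ is a fixed direction; since $\langle W,V\rangle$ is identically zero by construction, $\alpha$ is a normal helix of the required type.

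The case $\theta=0$ should be incorporated separately, since $\tan\theta=0$ reduces the natural equation to $(\tau_\alpha/\kappa_\alpha)'=0$, which is precisely the Lancret characterization of cylindrical helices; in this case $W=N_\alpha$ recovers the slant-helix setting and the argument collapses to a classical statement. I expect the only delicate point to be checking that the derivation leading to \eqref{ecnaturalnormal} is genuinely reversible: the division by $\cos^2\theta\,\kappa_\alpha^2+\tau_\alpha^2$ is harmless because $\kappa_\alpha>0$ and $\theta\in(-\pi/2,\pi/2)$, and the exponential definition of $\mu$ ensures $\mu$ never vanishes, so no spurious solutions are introduced. Everything else is a routine but careful bookkeeping of the Frenet--Serret identities.
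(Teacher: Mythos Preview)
Your proposal is correct and follows essentially the same route as the paper: write $V$ in the orthonormal frame $\{T_\alpha,W,\sin\theta\,N_\alpha-\cos\theta\,B_\alpha\}$, differentiate, solve the $B_\alpha$- and $N_\alpha$-equations for $\mu$ and $\lambda$, and read off \eqref{ecnaturalnormal} from the $T_\alpha$-equation, with the converse obtained by reversing these steps. The only quibble is terminological: when $\theta=0$ you have $W=N_\alpha$ orthogonal to $V$, which is the cylindrical-helix case (Lancret), not the general slant-helix setting---but your identification of the natural equation with $(\tau_\alpha/\kappa_\alpha)'=0$ is exactly right.
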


\subsection{Geometric interpretation of normal helices}
\label{s:nh2}

Let $\alpha$ be a normal helix with axis $V$, and assume that $V$ is orthogonal to the \svf{} vector field $W=\cos\theta\,N_\alpha+\sin\theta\,B_\alpha$, $\theta$ being a constant. Let us consider $C_{\alpha,V}$ the cylinder parametrized by $X(t,z)=\alpha(t)+z\,V$, then we have
$N=X_t\times X_z=T_\alpha\times V=\cos\theta\,N_\alpha+\sin\theta\,B_\alpha$, up to a sign.
This shows that the principal normal vector field $N_\alpha$ of the curve $\alpha$ makes a constant angle $\theta$ with the unit vector field $N$ normal to the cylinder $C_{\alpha,V}$. It is not difficult to see that this condition characterizes the normal helices.

Let $M=C_{\beta,V}$ be a general cylinder parametrized by $X(t,z)=\beta(t)+zV$, where $\beta$ is a unit planar curve and $V$ is a unit vector orthogonal to that plane. If $\{T_\beta,N_\beta\}$ is the Frenet frame of $\beta$, assume that the unit normal vector to the cylinder is given by $N(t,z)=T_\beta(t)\times V=N_\beta(t)$. Let us assume that $\alpha(s)=X(t(s),z(s))$, $s\in I$, is an arclength parametrized curve in $M$ such that $N_\al$ makes a constant angle $\theta$ with $N$. A straightforward computation yields
\begin{align}\label{TNB-cil}
 T_\alpha(s)&=\cos\phi(s)\,T_\beta(t(s))+\sin\phi(s)\,V, &&\text{(a)}\nonumber\\
 N_\alpha(s)&=\sin\theta\,\big(-\sin\phi(s)\,T_\beta(t(s))+\cos\phi(s)\,V\big)+\cos\theta\, N,&&\text{(b)}\\
 B_\alpha(s)&=-\cos\theta\,\big(-\sin\phi(s)\,T_\beta(t(s))+\cos\phi(s)\,V\big)+\sin\theta\, N, &&\text{(c)}\nonumber
\end{align}
where $\phi\in\mathcal{C}^\infty(I)$ is a differentiable function with $t'(s)=\cos\phi(s)$ and $z'(s)=\sin\phi(s)$. It is easy to see that
\begin{equation}\label{TNB-cil2}
V=\sin\phi(s)\,T_\al(s)+\cos\phi(s)\big(\sin\theta\,N_\al(s)-\cos\theta\,B_\al(s)\big),
\end{equation}
and then we can define the $F$-constant vector field $W=\cos\theta\,N_\alpha+\sin\theta\,B_\alpha$ satisfying $\<W,V\>=0$, showing that $\al$ is a normal helix. Therefore, we have proved the following result.

\begin{theo}\label{th3}
A curve $\alpha$ in $\R3$ is a normal helix with axis $V$ if and only if $\alpha$ lies on a cylinder $C$ and its principal normal vector field makes a constant angle with the normal vector field to the cylinder.
\end{theo}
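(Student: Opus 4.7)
\textbf{Plan of proof for Theorem \ref{th3}.} The statement is a biconditional, and both halves are essentially sketched in the paragraphs preceding the theorem; my plan is to repackage those computations into a clean argument, one direction at a time, identifying the right cylinder for each direction.

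\textbf{Forward direction.} Assume $\al$ is a normal helix with axis $V$ and constant angle $\theta$, and take $W=\cos\theta\,N_\al+\sin\theta\,B_\al$. The candidate surface is the cylinder $C=C_{\al,V}$ parametrised by $X(t,z)=\al(t)+zV$, which trivially contains $\al$ (at $z=0$). The first step is regularity: from equation (\ref{V}), $V=\la\,T_\al+\mu(\sin\theta\,N_\al-\cos\theta\,B_\al)$ with $\mu=e^{\tan\theta\int\tau_\al}$ nowhere zero, so the component of $V$ orthogonal to $T_\al$ is nonzero and $X_t\times X_z=T_\al\times V\neq0$. A direct expansion gives
\[
T_\al\times V=\mu\bigl(\sin\theta\,T_\al\times N_\al-\cos\theta\,T_\al\times B_\al\bigr)=\mu\bigl(\cos\theta\,N_\al+\sin\theta\,B_\al\bigr)=\mu W,
\]
so the unit normal to $C$ along $\al$ is $N=\pm W$. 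Consequently $\<N_\al,N\>=\pm\cos\theta$ is constant, which is the claim.

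\textbf{Converse direction.} Suppose $\al$ lies on a cylinder and $N_\al$ makes a constant angle $\theta$ with its unit normal $N$. Realise the cylinder as $C_{\be,V}$, where $\be$ is the planar cross-section in a plane orthogonal to $V$, and write $\al(s)=\be(t(s))+z(s)V$. Since $\al$ has unit speed and $T_\be\perp V$, one has $t'(s)^2+z'(s)^2=1$, so $t'(s)=\cos\phi(s)$, $z'(s)=\sin\phi(s)$ for a smooth $\phi$; this yields line (a) of (\ref{TNB-cil}). Now $\{T_\be,V,N\}$ is an orthonormal frame adapted to the cylinder, and $N_\al\in T_\al^\perp$ lies in the plane spanned by $N$ and $U:=-\sin\phi\,T_\be+\cos\phi\,V$. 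The hypothesis $\<N_\al,N\>=\cos\theta$ forces $N_\al=\sin\theta\,U+\cos\theta\,N$, which is line (b); line (c) follows from $B_\al=T_\al\times N_\al$ after a short cross-product computation. Inverting these relations yields (\ref{TNB-cil2}), and a direct inner product shows $W=\cos\theta\,N_\al+\sin\theta\,B_\al$ is orthogonal to $V$. Finally, $W$ is $F$-constant by part (4) of the proposition following Definition \ref{F-constant}, since its coefficients are constant; and $V$ is a fixed direction by construction of the cylinder. Hence $\al$ is a normal helix with axis $V$.

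\textbf{Main obstacle.} The trickiest step is the bookkeeping behind (\ref{TNB-cil}): one must fix a consistent orientation so that $B_\al=T_\al\times N_\al$ produces (c) with the same sign of $\cos\theta$ chosen in (b), and one must handle the isolated points where $\cos\phi(s)=0$ (i.e.\ where $\al$ is momentarily tangent to the generator $V$) by a continuity argument, since the adapted basis $\{T_\al,U,N\}$ becomes singular there. Once the frame identities (\ref{TNB-cil}) and (\ref{TNB-cil2}) are in hand, the rest is bookkeeping and an appeal to Proposition (4).
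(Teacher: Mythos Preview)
Your proposal is correct and follows essentially the same route as the paper: for the forward direction you use the cylinder $C_{\al,V}$ and compute $T_\al\times V$ via the decomposition (\ref{V}), and for the converse you reproduce the adapted-frame identities (\ref{TNB-cil}) and (\ref{TNB-cil2}) on the cylinder $C_{\be,V}$ to exhibit the $F$-constant vector field $W$ orthogonal to $V$. Your write-up is in fact more careful than the paper's on two points---the regularity check $T_\al\times V\neq0$ (via $\mu\neq0$) and the sign/orientation bookkeeping behind line (c)---which the paper absorbs into the phrases ``up to a sign'' and ``a straightforward computation yields''.
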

%\medskip

Note that when $\theta=0$ then $\alpha$ is a geodesic of the cylinder, and so it is a cylindrical helix. Hence, Theorem \ref{th3} is an extension of the well known theorem of Lancret that characterizes the cylindrical helices as the geodesics of the cylinders.

We finish this section with the following result about curves $\al(s)$ in a cylinder $C_{\beta,V}$. By taking derivative in equations (a) and (c) of (\ref{TNB-cil}) we get
\begin{align*}
\kappa_\alpha(s) N_\alpha(s) &=\phi'(s)(-\sin\phi(s)\,T_\beta(t(s))+\cos\phi(s)\,V)+\cos^2\phi(s)\kappa_\beta(t(s))\,N_\beta(t(s)),\\
-\tau_\alpha(s) N_\alpha(s)&= \phi'(s)\cos\theta(\cos\phi(s)\,T_\beta(t(s))+\sin\phi(s)\,V)+\\
&\kern1em+\cos\phi\kappa_\beta(-\sin\theta\, T_\beta(t(s))+\cos\theta\sin\phi\, N_\beta(t(s))).%\nonumber
\end{align*}
These two equations lead to the following result.
\begin{prop}\label{hn-cil}
Let $\alpha(s)=X(t(s),z(s))$ be an arclength parametrized curve in a cylinder $C_{\beta,V}$. The principal normal vector field $N_\alpha$ makes a constant angle $\theta$ with the normal to the cylinder if and only if there is a differentiable function $\phi$ such that the following equations hold
\begin{align}
  t'(s)&=\cos\phi(s),\label{ecuac cilind1} \\
  z'(s)&=\sin\phi(s), \label{ecuac cilind2}\\
  \phi'(s)&=\tan\theta\,\cos^2(\phi(s))\,\kappa_\beta(t(s)).\label{ecuac cilind3}
\end{align}
Moreover, the curvature and torsion of $\alpha$ are given by
\begin{equation}\label{curvaturas cilindro}
  \kappa_\alpha(s)=\frac{\cos^2\phi(s)}{\cos\theta}\kappa_\beta(t(s)),\quad \tau_\alpha(s)=-\sin\phi(s)\cos\phi(s)\,\kappa_\beta(t(s)).
\end{equation}
\end{prop}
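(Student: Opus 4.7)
The plan is to prove both directions of the equivalence by differentiation and comparison of coefficients, working with respect to the naturally adapted orthonormal frame on the cylinder.

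For the $(\Rightarrow)$ direction, I would start from the expressions (\ref{TNB-cil}) derived in the preceding paragraph under the standing hypothesis that $N_\alpha$ makes a constant angle $\theta$ with $N=N_\beta$. Differentiating $\alpha(s)=\beta(t(s))+z(s)V$ gives $T_\alpha(s)=t'(s)T_\beta(t(s))+z'(s)V$, and matching with (a) of (\ref{TNB-cil}) immediately yields (\ref{ecuac cilind1}) and (\ref{ecuac cilind2}); the arclength assumption is consistent with $(t')^2+(z')^2=1$. Next I would use the expression for $\kappa_\alpha N_\alpha$ displayed just before the proposition, observing that $\{T_\alpha,\,-\sin\phi\,T_\beta+\cos\phi\,V,\,N_\beta\}$ is a smooth orthonormal frame along $\alpha$. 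Decomposing both $\kappa_\alpha N_\alpha$ (from that displayed formula) and $\kappa_\alpha N_\alpha$ (from (b)) in this basis and equating the two perpendicular components gives
\[
\kappa_\alpha\sin\theta=\phi'(s),\qquad \kappa_\alpha\cos\theta=\cos^2\phi(s)\,\kappa_\beta(t(s)).
\]
The second equation is precisely the curvature formula in (\ref{curvaturas cilindro}), and substituting it into the first produces (\ref{ecuac cilind3}).

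For the torsion, I would turn to the second displayed equation for $-\tau_\alpha N_\alpha$ (obtained by differentiating (c)). Comparing the coefficient of $T_\beta$ on the two sides, substituting the newly obtained value $\phi'=\tan\theta\,\cos^2\phi\,\kappa_\beta$, and simplifying using $\cos^2\phi-1=-\sin^2\phi$ would give $\tau_\alpha=-\sin\phi\cos\phi\,\kappa_\beta$, completing (\ref{curvaturas cilindro}). (Alternatively, one could extract $\tau_\alpha$ by matching the $V$- or $N_\beta$-coefficient, which serves as a consistency check.)

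For the $(\Leftarrow)$ direction, I would reverse the argument. Assuming a function $\phi$ satisfies (\ref{ecuac cilind1})--(\ref{ecuac cilind3}), the relation $(t')^2+(z')^2=\cos^2\phi+\sin^2\phi=1$ shows that $\alpha$ is arclength parametrized with $T_\alpha=\cos\phi\,T_\beta+\sin\phi\,V$. Differentiating once more produces
\[
\kappa_\alpha N_\alpha=\phi'(-\sin\phi\,T_\beta+\cos\phi\,V)+\cos^2\phi\,\kappa_\beta\,N_\beta,
\]
and substituting (\ref{ecuac cilind3}) for $\phi'$ factors the right-hand side as $(\cos^2\phi\,\kappa_\beta/\cos\theta)\bigl(\sin\theta(-\sin\phi\,T_\beta+\cos\phi\,V)+\cos\theta\,N_\beta\bigr)$; taking norms recovers the asserted value of $\kappa_\alpha$, and then $N_\alpha$ is the parenthesized unit vector, which has inner product $\cos\theta$ with $N=N_\beta$.

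I do not anticipate any real obstacle: the argument is essentially bookkeeping in an adapted orthonormal frame. The point that requires attention is the justification that $\{T_\alpha,\,-\sin\phi\,T_\beta+\cos\phi\,V,\,N_\beta\}$ is orthonormal, which legitimises coefficient-matching; once this is noted, both directions reduce to routine manipulation of the Frenet relations for $\alpha$ and for the planar base curve $\beta$.
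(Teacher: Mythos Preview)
Your proposal is correct and follows essentially the same route as the paper: both arguments differentiate the frame expressions (\ref{TNB-cil})(a) and (c) to obtain the two displayed formulas for $\kappa_\alpha N_\alpha$ and $-\tau_\alpha N_\alpha$, then read off (\ref{ecuac cilind1})--(\ref{ecuac cilind3}) and (\ref{curvaturas cilindro}) by matching components against (\ref{TNB-cil})(b) in the adapted orthonormal frame $\{T_\alpha,\,-\sin\phi\,T_\beta+\cos\phi\,V,\,N_\beta\}$. The paper is terse (``These two equations lead to the following result''), whereas you spell out the coefficient comparison and the $(\Leftarrow)$ direction explicitly; otherwise the arguments coincide.
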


On the other hand, from (\ref{curvaturas cilindro}) we get
\begin{equation*}%\label{prop pend curv2}
\frac{\tau_\alpha}{\kappa_\alpha}(s)=-\cos\theta\tan\phi(s),
\end{equation*}
and since $(\tan\phi)'(s)=\tan\theta\, \kappa_\beta(t(s))$, we have that only in circular cylinders there exist normal helices that are also rectifying curves, see \cite{Chen03}.

\subsection{An example: normal helices in circular cylinders}

Let $C_{\beta,V}$ be a cylinder over a circle $\beta$ of radius one. Then from Proposition \ref{hn-cil} we get
\begin{align*}
\phi(s)&=\arctan(\tan(\theta)s),\\
t(s)&=\cot(\theta)\sinh^{-1}(\tan(\theta)s)+t_0,\\
z(s)&=\cot(\theta)\sqrt{1+\tan^2(\theta)s^2}+z_0,
\end{align*}
where $\theta,t_0,z_0$ are constants. Hence, a family of normal helices in the cylinder is given by
\begin{align*}
\alpha(s)&=\Big(\cos(\cot(\theta)\sinh^{-1}(\tan(\theta)s)+t_0),\\
&\kern2em\sin(\cot(\theta)\sinh^{-1}(\tan(\theta)s)+t_0),\\
&\kern2em\cot(\theta)\sqrt{1+\tan^2(\theta)s^2}+z_0\Big).
\end{align*}
Moreover, from (\ref{curvaturas cilindro}) we obtain that the curvature and torsion of $\alpha$ are given by
\begin{equation}\label{curvcpnccilindro}
\kappa_\alpha(s)=\frac{\cos\theta}{\cos^2\theta+\sin^2(\theta)\,s^2},\quad  \tau_\alpha(s)=\frac{-\sin\theta\cos(\theta)\,s}{\cos^2\theta+\sin^2(\theta)\,s^2}.
\end{equation}
Note that these curves verify that $\tau_\al/\kappa_\al(s)=-\sin(\theta)s$, so they are rectifying curves.

We can reparametrize the curves $\alpha$ to obtain a simpler expression. Indeed, let us consider the change of parameter $\tan(\theta)s=\sinh(\tan(\theta)t)$, then
\begin{equation*}
\alpha(t)=\big(\cos(t+t_0),\sin(t+t_0),\cot(\theta)\cosh(\tan(\theta)t)+z_0\big),
\end{equation*}
and the curvature and torsion can be computed as follows
\begin{equation}\label{curvcpnccilindrot}
\kappa_\alpha(t)=\frac{\sec\theta}{\cosh^2(\tan(\theta)t)},\quad \tau_\alpha(t)=\frac{-\sinh(\tan(\theta)t)}{\cosh^2(\tan(\theta)t)}.
\end{equation}
A picture of a normal helix is shown in figure \ref{fig.ex}.

\begin{figure}
\begin{center}
\includegraphics[height=140mm,angle=-90]{./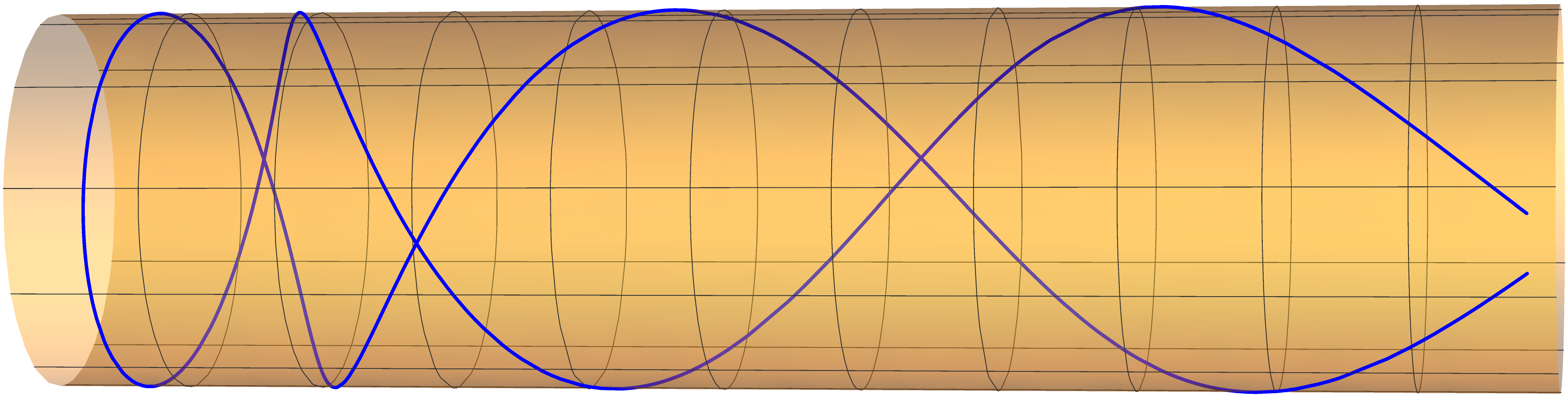}% {cpnc[pi36,0,-13]}
\end{center}
\caption{\label{fig.ex}A normal helix with $\theta=\pi/36$ in a circular cylinder}
\end{figure}

\section{Osculating helices}

\subsection{Natural equation of osculating helices}
\label{s:oh}

Let $\alpha$ be a nonplanar osculating helix with axis $V$. Then there is an osculating vector field $W=\cos\theta\,T_\alpha+\sin\theta\,N_\alpha$, for a nonzero constant angle $\theta\in(-\pi/2,\pi/2)$, which is orthogonal to $V$. Hence we can write
\begin{equation}\label{VV}
V=\mu(-\sin\theta\;T_\alpha+\cos\theta\;N_\alpha)+\lambda\,B_\alpha,
\end{equation}
for certain differentiable functions $\lambda$ and $\mu$. By derivating here we obtain the following equations
\begin{align}
-\sin\theta\;\mu'-\mu\cos\theta\;\kappa_\alpha &=0, \label{eqV'11}\\
\cos\theta\;\mu'-\lambda\,\tau_\alpha-\mu\sin\theta\;\kappa_\alpha &=0, \label{eqV'22}\\
\lambda'+\mu\cos\theta\;\tau_\alpha &=0.\label{eqV'33}
\end{align}
From equation (\ref{eqV'11}) we get
\begin{equation}\label{mumu}
\mu=e^{-\cot\theta\,\int\kappa_\alpha},
\end{equation}
that jointly with (\ref{eqV'22}) yields
\begin{equation}\label{c}
\lambda=\frac{-1}{\sin\theta\rho}\, e^{-\cot\theta\,\int\kappa_\alpha},
\end{equation}
where $\rho$ is the Lancret curvature. On the other hand, by putting equations (\ref{mumu}) and (\ref{c}) in (\ref{eqV'33}), we get
\[
\frac{1}{\sin\theta}\left(\frac{\rho'}{\rho^2}+\cot\theta\,\kappa_\alpha\,\frac{\rho}{\rho^2}\right)= -\cos\theta\;\tau_\alpha,
\]
and then
\[
-\rho'=\sin\theta\cos\theta\,\tau_\alpha\rho^2+\cot\theta\,\kappa_\alpha\,\rho,
\]
which can be rewritten as
\begin{equation}\label{ecnaturalosculadora}
  \frac{\kappa_\alpha^2}{\kappa_\alpha^2+\sin^2\theta\,\tau^2_\alpha} \left(\frac{\tau_\alpha}{\kappa_\alpha}\right)'=-\cot\theta\,\tau_\alpha.
\end{equation}
Note that this equation is equivalent to
\begin{equation}\label{ecnaturalosculadora2}
\frac{\tau_\alpha}{\kappa_\alpha^2+\sin^2\theta\,\tau^2_\alpha} \left(\frac{\kappa_\alpha}{\tau_\alpha}\right)'=\cot\theta.
\end{equation}
Now, we will see that this equation characterizes the osculating helices. Indeed,
let $\alpha$ be a curve in $\R{3}$ satisfying (\ref{ecnaturalosculadora2}) for a nonzero constant $\theta\in(-\pi/2,\pi/2)$. Let us define a vector field $V$ as in (\ref{VV}), where $\lambda$ and $\mu$ are given by (\ref{c}) and (\ref{mumu}), respectively. Then equations (\ref{eqV'11}) and (\ref{eqV'22}) are satisfied. Finally, it is straightforward to see that equation (\ref{ecnaturalosculadora2}) leads to (\ref{eqV'33}). Then there exists a constant $\theta$ such that $\cos\theta\,T_\alpha+\sin\theta\,N_\alpha$ is orthogonal to the constant direction $V$, that is, $\alpha$ is an osculating helix.

We have proved the following characterization of the osculating helices (we also include the cases $\theta=\pm\pi/2$).

\begin{theo}
Let $\alpha$ be a nonplanar arclength parametrized curve in $\R3$, with curvature $\kappa_\alpha>0$ and torsion $\tau_\alpha$. Then $\alpha$ is an osculating helix (with $W$ orthogonal to $V$) if and only if the following equation holds
\begin{equation*}%\label{ecnaturalosculadorath}
 \frac{\tau_\alpha}{\kappa_\alpha^2+\sin^2\theta\,\tau^2_\alpha} \left(\frac{\kappa_\alpha}{\tau_\alpha}\right)'=\cot\theta,
\end{equation*}
for a nonzero constant angle $\theta\in[-\pi/2,\pi/2]$.
\end{theo}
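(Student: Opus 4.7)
The plan is to mirror the argument carried out in Section \ref{s:nh1} for normal helices, now with $W$ an osculating rather than a normal vector field. First I would treat the generic range $\theta\in(-\pi/2,\pi/2)\setminus\{0\}$, and then address the boundary values $\theta=\pm\pi/2$ separately at the end.

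For the direct implication, assume $\alpha$ is an osculating helix with axis $V$ and \svf{} vector field $W=\cos\theta\,T_\alpha+\sin\theta\,N_\alpha$ orthogonal to $V$. Since $V$ is constant and lies in the 2-plane $W^\perp=\mathrm{span}\{-\sin\theta\,T_\alpha+\cos\theta\,N_\alpha,\,B_\alpha\}$, I can decompose it as in (\ref{VV}) for smooth coefficients $\mu,\lambda$. Applying the Frenet--Serret equations (\ref{FS-eq}) and imposing $V'=0$ produces the three scalar equations (\ref{eqV'11})--(\ref{eqV'33}). From (\ref{eqV'11}) one integrates to get $\mu=e^{-\cot\theta\int\kappa_\alpha}$, and feeding this into (\ref{eqV'22}) gives $\lambda=-\mu/(\sin\theta\,\rho)$ with $\rho=\tau_\alpha/\kappa_\alpha$ the Lancret curvature. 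Substituting these back into (\ref{eqV'33}) and simplifying yields the natural equation (\ref{ecnaturalosculadora2}).

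For the converse, given (\ref{ecnaturalosculadora2}), I would define $\mu,\lambda$ by the formulas (\ref{mumu}) and (\ref{c}), and form $V$ as in (\ref{VV}). By construction, equations (\ref{eqV'11}) and (\ref{eqV'22}) hold identically, and a short direct computation shows that (\ref{eqV'33}) follows precisely from the natural equation. Hence $V$ is a constant vector, orthogonal by design to the \svf{} vector field $W=\cos\theta\,T_\alpha+\sin\theta\,N_\alpha$, so $\alpha$ is an osculating helix as in Definition \ref{helix}.

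The main obstacle, modest but worth attention, is the boundary case $\theta=\pm\pi/2$, where $\cot\theta=0$ and $W=\pm N_\alpha$. Here the natural equation degenerates to $(\kappa_\alpha/\tau_\alpha)'=0$, i.e., Lancret's classical condition characterizing cylindrical helices. One must verify directly that these are genuine osculating helices in the sense of Definition \ref{helix}: for a cylindrical helix the constant axis $V$ lies in the rectifying plane, so $\langle V,N_\alpha\rangle\equiv 0$ and $W=N_\alpha$ is \svf{} and orthogonal to $V$. The remaining bookkeeping—that the formula (\ref{c}) for $\lambda$ involves $1/\rho$ and hence needs $\tau_\alpha\neq 0$, which is guaranteed by our standing nonplanarity hypothesis—is routine.
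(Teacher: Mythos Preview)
Your proposal is correct and follows essentially the same route as the paper: decompose $V$ in $W^\perp$ as in (\ref{VV}), differentiate to obtain (\ref{eqV'11})--(\ref{eqV'33}), solve for $\mu$ and $\lambda$ via (\ref{mumu}) and (\ref{c}), and reduce (\ref{eqV'33}) to the natural equation (\ref{ecnaturalosculadora2}), with the converse handled by reversing the construction. Your explicit treatment of the boundary case $\theta=\pm\pi/2$ (where $W=\pm N_\alpha$ and the equation collapses to Lancret's condition) is slightly more detailed than the paper's parenthetical remark, but the substance is identical.
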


\subsection{Normal helices and osculating helices}

Let $\alpha(s)$ be a normal helix with Frenet apparatous $\{\kappa_\alpha,\tau_\alpha; T_\alpha,N_\alpha,B_\alpha\}$. Then the curve
\def\balpha{{\overline\alpha}}
\begin{equation}\label{hosculadora}
\balpha(s)=\int_{s_0}^s B_\alpha(t)\,dt
\end{equation}
is an arclength parametrized curve, and without loss of generality its Frenet frame is given by
\[
T_\balpha=B_\alpha,\quad
N_\balpha=-N_\alpha,\quad
B_\balpha=T_\alpha.
\]
Hence their curvature and torsion are given by $\kappa_\balpha=\tau_\alpha$ and $\tau_\balpha=\kappa_\alpha$. It is straightforward to see that $\balpha$ satisfies (\ref{ecnaturalosculadora2}), and so it is an osculating helix.

On the other hand, and following a similar reasoning, it can be proved that if $\balpha$ is an osculating helix then the curve
\begin{equation}\label{hnormal}
\alpha(s)=\int_{s_0}^s B_\balpha(t)\,dt
\end{equation}
is a normal helix. Therefore, and in a certain sense, normal helices and osculating helices can be considered as duals of each other.

\section{Rectifying helices}
\label{s:rh}

Let $\alpha$ be a nonplanar curve in $\R3$ with Frenet apparatous $\{\kappa_\alpha,\,\tau_\alpha;T_\alpha,N_\alpha,B_\alpha\}$, and assume that $\alpha$ is a rectifying helix with axis $V$, $V$ being a constant vector. Let us suppose there is a constant angle $\theta$ such that the rectifying vector field $W=\cos\theta\, B_\alpha+\sin\theta\,T_\alpha$ is orthogonal to $V$. Hence we can write
\begin{equation*}%\label{VVV}
V=\lambda\,N_\alpha+\mu(\sin\theta\,B_\alpha-\cos\theta\,T_\alpha),
\end{equation*}
for certain differentiable functions $\lambda$ and $\mu$. By taking derivative there we get
\begin{align*}
-\lambda\,\kappa_\alpha-\mu'\,\cos\theta &=0,\\%\label{eqV'111}\\
\lambda'-\mu\,\cos\theta\,\kappa_\alpha-\mu\sin\theta\,\tau_\alpha &=0,\\% \label{eqV'222}\\
\lambda\,\tau_\alpha+\mu'\,\sin\theta&=0.%\label{eqV'333}
\end{align*}
From these equations we easily deduce that $\tau_\alpha/\kappa_\alpha$ is constant, and so $\alpha$ is a cylindrical helix. Since every cylindrical helix is also a rectifying helix, then we have proved the following result.
\begin{theo}
Let $\alpha$ be an arclength parametrized curve in $\R3$ with curvature $\kappa_\alpha>0$. Then $\alpha$ is a rectifying helix (with $W$ orthogonal to $V$)  if and only if it is a cylindrical helix.
\end{theo}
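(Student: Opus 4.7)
The plan is to treat the two implications separately. For the non-trivial direction (rectifying helix with $W\perp V$ implies cylindrical helix), I accept the excerpt's derivation of the three scalar equations
\begin{align*}
-\lambda\,\kappa_\alpha-\mu'\,\cos\theta &=0,\\
\lambda'-\mu\,\cos\theta\,\kappa_\alpha-\mu\sin\theta\,\tau_\alpha &=0,\\
\lambda\,\tau_\alpha+\mu'\,\sin\theta&=0,
\end{align*}
obtained by imposing $V'=0$ on the decomposition $V=\lambda N_\alpha+\mu(\sin\theta B_\alpha-\cos\theta T_\alpha)$, which is forced by $V\perp W$ together with the fact that $\{N_\alpha,\,\sin\theta B_\alpha-\cos\theta T_\alpha\}$ spans $W^\perp$ pointwise. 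The problem then reduces to showing that the Lancret curvature $\tau_\alpha/\kappa_\alpha$ is constant.

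The central step I have in mind is to eliminate the cross term $\lambda\kappa_\alpha\tau_\alpha$ by multiplying the first equation by $\tau_\alpha$ and the third by $\kappa_\alpha$ and subtracting, which yields
\[
\mu'\bigl(\tau_\alpha\cos\theta-\kappa_\alpha\sin\theta\bigr)=0.
\]
This dichotomy drives everything. Wherever $\mu'\neq 0$, we read off $\tau_\alpha/\kappa_\alpha=\tan\theta$, a constant. On any open interval where $\mu'\equiv 0$, the first equation forces $\lambda\equiv 0$ (using $\kappa_\alpha>0$), and the middle equation collapses to $\mu(\cos\theta\,\kappa_\alpha+\sin\theta\,\tau_\alpha)=0$; since $V\neq 0$ we must have $\mu\neq 0$ there, so $\tau_\alpha/\kappa_\alpha=-\cot\theta$, again a constant. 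Thus on any connected piece of the curve the Lancret curvature is one of two specific constants, and continuity closes the argument, because $\tan\theta=-\cot\theta$ has no real solution. Hence $\tau_\alpha/\kappa_\alpha$ is globally constant and $\alpha$ is a cylindrical helix by Lancret's theorem.

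For the converse, if $\alpha$ is a nonplanar cylindrical helix with axis $V$, then differentiating $\langle T_\alpha,V\rangle=\text{const}$ and using $\kappa_\alpha>0$ places $V$ in the rectifying plane, so $V=a\,T_\alpha+b\,B_\alpha$ with $a,b$ constant; the hypotheses $\kappa_\alpha>0$, $\tau_\alpha\neq 0$, $V\neq 0$ together rule out $a=0$ or $b=0$. Choosing $\theta\in(-\pi/2,\pi/2)\setminus\{0\}$ with $\tan\theta=-b/a$ produces a rectifying $F$-constant vector field $W=\cos\theta\,B_\alpha+\sin\theta\,T_\alpha$ with $\langle W,V\rangle=0$, exhibiting $\alpha$ as a rectifying helix with $W\perp V$.

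The one subtle point I anticipate is the gluing step in the forward direction: ruling out a curve on which $\mu'$ vanishes on one subinterval but not another. This is handled cleanly by the incompatibility of the two candidate Lancret values $\tan\theta$ and $-\cot\theta$, so the argument needs nothing beyond continuity of $\kappa_\alpha$ and $\tau_\alpha$; no analyticity or further regularity hypothesis enters.
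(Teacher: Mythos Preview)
Your proof is correct and follows essentially the same route as the paper, which derives the identical three scalar equations and then simply asserts that ``from these equations we easily deduce that $\tau_\alpha/\kappa_\alpha$ is constant''; you have supplied the details the paper omits, including the case split and the continuity/gluing argument (note: to eliminate $\lambda\kappa_\alpha\tau_\alpha$ you should \emph{add} rather than subtract the two scaled equations, though your resulting identity $\mu'(\tau_\alpha\cos\theta-\kappa_\alpha\sin\theta)=0$ is correct). Your treatment of the converse is likewise a fleshed-out version of the paper's one-line remark that every cylindrical helix is a rectifying helix.
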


\section{The general case}
\label{s:gsh}

Let us assume, in this section, that the unit \svf{} vector field $W$ along $\alpha$ is given by $W=aT_\alpha+bN_\al+cB_\al$, where $a,b,c$ are nonzero constants with $a^2+b^2+c^2=1$ (the other cases have already been analyzed in the preceding sections). Since we are assuming that $\<W,V\>=0$, then there exists two differentiable functions $\lambda,\mu$ such that
\begin{equation}\label{Vgen}
V=\lambda(-cT_\al+aB_\al)+\mu(-cN_\al+bB_\al),
\end{equation}
and by derivating here we obtain the following equations:
\begin{align}
0&= \lambda'-\mu\kappa_\al,\label{4.1}\\
0&= c\mu'+\lambda(c\kappa_\al+a\tau_\al)+b\mu\tau_\al,\label{4.2}\\
0&= a\lambda'+b\mu'-c\mu\tau_\al.\label{4.3}
\end{align}
From (\ref{4.1}) and (\ref{4.3}) we get
\begin{equation}\label{4.4}
\mu'=\frac1b(c\tau_\al-a\kappa_\al)\mu,
\end{equation}
that jointly with (\ref{4.2}) leads to
\[
c\mu(c\tau_\al-a\kappa_\al)+\lambda b(c\kappa_\al+a\tau_\al)+b^2\mu\tau_\al=0.
\]
Then
\begin{equation}\label{4.45}
\lambda=g\mu,\quad\text{ with } g=\frac{ac\kappa_\al-(b^2+c^2)\tau_\al}{b(c\kappa_\al+a\tau_\al)}.
\end{equation}
Now, by using (\ref{4.1}) and (\ref{4.4}) we obtain
\begin{equation}\label{4.5}
b\kappa_\al=bg'+g(c\tau_\al-a\kappa_\al).
\end{equation}
Since $g'$ is given by
\[
g'=\frac{-c\kappa_\al^2}{b(c\kappa_\al+a\tau_\al)^2}\left(\frac{\tau_\al}{\kappa_\al}\right)',
\]
a straightforward computation from (\ref{4.5}) yields
\begin{equation}\label{4.7}
\frac{\kappa_\al^2} {c\kappa_\al\big((1-c^2)\kappa_\al^2+(1-3a^2)\tau_\al^2\big)+ a\tau_\al\big((1-a^2)\tau_\al^2+(1-3c^2)\kappa_\al^2\big)}\left(\frac{\tau_\al}{\kappa_\al}\right)'=-\frac1b.
\end{equation}
Conversely, let $\alpha$ be a curve satisfying (\ref{4.7}) for certain nonzero constants $a,b,c$. Let $V$ be the nonzero vector field given in (\ref{Vgen}), where $\mu$ and $\lambda$ are given by (\ref{4.4}) and (\ref{4.45}), respectively. As in the preceding sections, it is a straightforward (if somewhat laborious) calculation to check that equations (\ref{4.1})--(\ref{4.3}) are satisfied. Therefore, we have found a \svf{} vector field $W=aT_\alpha+bN_\al+cB_\al$ which is orthogonal to the fixed direction $V$.

To finish this section, and in order to consider also the cases in which any of the constants $a$, $b$ or $c$ could be zero, let us note that  equation (\ref{4.7}) can be rewritten as
\begin{equation}\label{4.7'}
-b\kappa_\al^2\left(\frac{\tau_\al}{\kappa_\al}\right)'=
c\kappa_\al\big((1-c^2)\kappa_\al^2+(1-3a^2)\tau_\al^2\big)+ a\tau_\al\big((1-a^2)\tau_\al^2+(1-3c^2)\kappa_\al^2\big),
\end{equation}
and then we have all the cases previously analyzed:
\begin{center}\def\arraystretch{1.2}
\begin{tabular}{llc}\hline
$W$ & $\alpha$ is a & Eq. (\ref{4.7'}) reduces to\\\hline
\rule{0pt}{15pt}$T_\al$ & plane curve & $\rho=0$\\
$N_\al$ & cylindrical helix & $\rho'=0$\\
$B_\al$ & plane curve & $\rho=0$\\
$bN_\al+cB_\al$ & normal helix & Eq. (\ref{ecnaturalnormal})\\
$aT_\al+bN_\al$ & osculating helix & Eq. (\ref{ecnaturalosculadora2})\\
$aT_\al+cB_\al$ & rectifying helix & $\rho'=0$\\
$aT_\alpha+bN_\al+cB_\al$ & helix & Eq. (\ref{4.7})\\\hline
\end{tabular}
\end{center}
In conclusion, we have shown the following result.
\begin{theo}
Let $\alpha$ be a nonplanar arclength parametrized curve in $\R3$ with curvature $\kappa_\alpha>0$ and torsion $\tau_\alpha$. Then $\alpha$ is a helix (associated to the unit \svf{} vector field $W=aT_\alpha+bN_\al+cB_\al$ orthogonal to the axis) if and only if the following equation holds
\[
-b\kappa_\al^2\left(\frac{\tau_\al}{\kappa_\al}\right)'=
c\kappa_\al\big((1-c^2)\kappa_\al^2+(1-3a^2)\tau_\al^2\big)+ a\tau_\al\big((1-a^2)\tau_\al^2+(1-3c^2)\kappa_\al^2\big),
\]
with $a^2+b^2+c^2=1$.
\end{theo}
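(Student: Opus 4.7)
The plan is to carry out the same kind of analysis already used for normal, osculating and rectifying helices, but now in the generic case where all three Frenet components of $W$ are nonzero. Assume $\alpha$ is a helix with axis $V$ and unit $F$-constant field $W=aT_\al+bN_\al+cB_\al$. The orthogonality $\langle W,V\rangle=0$ confines $V$ to the two-dimensional subspace $W^\perp\subset\R3$, for which $\{-cT_\al+aB_\al,\,-cN_\al+bB_\al\}$ is a convenient (not necessarily orthonormal) basis. This yields an expression $V=\lambda(-cT_\al+aB_\al)+\mu(-cN_\al+bB_\al)$ for some differentiable functions $\lambda,\mu$, as in (\ref{Vgen}). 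Differentiating and applying the Frenet--Serret formulas (\ref{FS-eq}) collapses $V'=0$ into the three scalar equations obtained by projecting onto $T_\al$, $N_\al$ and $B_\al$, namely the system (\ref{4.1})--(\ref{4.3}).

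Next, I would reduce this overdetermined system to a single ODE in $\kappa_\al$ and $\tau_\al$. Equations (\ref{4.1}) and (\ref{4.3}) are linear in $\lambda'$ and $\mu'$; eliminating $\lambda'$ yields formula (\ref{4.4}) for $\mu'$ in terms of $\mu$. Substituting $\lambda'=\mu\kappa_\al$ and (\ref{4.4}) into (\ref{4.2}) produces a purely algebraic relation $\lambda=g\mu$ with $g$ as in (\ref{4.45}). Differentiating this relation and re-inserting into (\ref{4.1}) eliminates $\lambda$ and $\mu$ altogether, leaving the scalar identity (\ref{4.5}). A direct computation of $g'$ by the quotient rule then turns (\ref{4.5}) into the natural equation (\ref{4.7}).

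The converse direction is then the reverse engineering of this derivation. Assuming (\ref{4.7}), I would define $\mu$ by integrating (\ref{4.4}), set $\lambda=g\mu$ via (\ref{4.45}) and assemble $V$ using (\ref{Vgen}). Verifying $V'=0$ amounts to re-checking (\ref{4.1})--(\ref{4.3}); the construction makes (\ref{4.2}) and (\ref{4.4}) automatic, and (\ref{4.7}) is precisely the compatibility condition needed to recover (\ref{4.1}). Since $V$ is then a nonzero constant vector and $\langle W,V\rangle=0$ by the choice of basis, $\alpha$ is a helix of the prescribed type.

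Finally, to cover the degenerate cases $a=0$, $b=0$ or $c=0$, I would clear the denominator $b$ and rewrite (\ref{4.7}) as the polynomial identity (\ref{4.7'}). One then reads off from the summary table that each special case recovers the natural equation already proved for that type of helix: $\rho=0$ for $W\in\{T_\al,B_\al\}$, $\rho'=0$ for the cylindrical and rectifying cases, and equations (\ref{ecnaturalnormal}) and (\ref{ecnaturalosculadora2}) for the normal and osculating cases respectively. The main obstacle I anticipate is the algebra at the second stage: the simplification of (\ref{4.5}) into the symmetric form (\ref{4.7}) involves several cancellations among terms containing $c\kappa_\al+a\tau_\al$, and one must work on the open set where this quantity is nonzero (handling its possible vanishing by a continuity argument or by the rewriting (\ref{4.7'}), which is polynomial and therefore valid everywhere).
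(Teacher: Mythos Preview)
Your proposal is correct and follows essentially the same route as the paper: the same basis for $W^\perp$, the same system (\ref{4.1})--(\ref{4.3}), the same reductions via (\ref{4.4}) and (\ref{4.45}) to the scalar identity (\ref{4.5}) and hence (\ref{4.7}), the same reverse-engineering for the converse, and the same passage to the polynomial form (\ref{4.7'}) for the degenerate cases. The only minor slip is that (\ref{4.2}) contains $\mu'$ but not $\lambda'$, so only the substitution of (\ref{4.4}) is needed there; your additional remark about the locus $c\kappa_\al+a\tau_\al=0$ and the continuity/polynomial workaround is a nice point the paper glosses over.
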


We finish this section with the geometric interpretation of helices in the general case.
\begin{theo}\label{th4}
Let $\alpha$ be a nonplanar arclength parametrized curve in $\R3$ with curvature $\kappa_\alpha>0$ and torsion $\tau_\alpha$. Then $\alpha$ is a helix (associated to the unit \svf{} vector field $W=aT_\alpha+bN_\al+cB_\al$ orthogonal to the axis $V$) if and only if $\al$ is contained in a cylinder $C_{\beta,V}$ and satisfies the following equation
\begin{equation}\label{th4.eq}
a\tan\phi+b\sin\theta-c\cos\theta=0,
\end{equation}
where $a,b,c$ are real constants and the angles $\phi$ and $\theta$ are given by
\[
%\sin\phi=\<T_\al,V\>,\quad\tan\theta=-\frac{\<N_\al,V\>}{\<B_\al,V\>}.
\sin\phi=\<T_\al,V\>,\quad\sin\theta\<B_\al,V\>+\cos\theta\<N_\al,V\>=0.
\]
\end{theo}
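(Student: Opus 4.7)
The plan is to reduce the theorem to a direct algebraic reformulation of the orthogonality condition $\langle W,V\rangle=0$ in terms of the Frenet components of the axis $V$. The cylinder containment is essentially automatic: for any unit direction $V$, one may always realize $\alpha$ as a curve on the cylinder $C_{\beta,V}$ by taking $\beta$ to be the orthogonal projection of $\alpha$ onto the plane $V^{\perp}$, and this projection is a regular planar curve because $\kappa_{\alpha}>0$ prevents $\alpha$ from being a straight line parallel to $V$. Consequently, the real content of the theorem lies entirely in the trigonometric identity $a\tan\phi+b\sin\theta-c\cos\theta=0$.

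For the direct implication, assume $\alpha$ is a helix with unit axis $V$ satisfying $\langle W,V\rangle=0$. Expanding the unit vector $V$ in the Frenet frame gives $\langle T_{\alpha},V\rangle^{2}+\langle N_{\alpha},V\rangle^{2}+\langle B_{\alpha},V\rangle^{2}=1$; setting $\sin\phi=\langle T_{\alpha},V\rangle$ this reduces to $\langle N_{\alpha},V\rangle^{2}+\langle B_{\alpha},V\rangle^{2}=\cos^{2}\phi$. Combined with the defining relation for $\theta$, and using the sign convention already fixed in equation (\ref{TNB-cil2}), this pins down $\langle N_{\alpha},V\rangle=\cos\phi\sin\theta$ and $\langle B_{\alpha},V\rangle=-\cos\phi\cos\theta$. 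Substituting into $\langle W,V\rangle=a\sin\phi+b\langle N_{\alpha},V\rangle+c\langle B_{\alpha},V\rangle=0$ and factoring produces $a\sin\phi+\cos\phi\,(b\sin\theta-c\cos\theta)=0$; dividing through by $\cos\phi$ yields the asserted equation. For the converse, $V$ is already a unit constant direction (the cylinder's generator) and $W=aT_{\alpha}+bN_{\alpha}+cB_{\alpha}$ is automatically a unit $F$-constant vector field by property (4) of the earlier proposition together with $a^{2}+b^{2}+c^{2}=1$. Running the same $(\phi,\theta)$ parametrization of the Frenet components of $V$ in reverse, one computes $\langle W,V\rangle=\cos\phi\,(a\tan\phi+b\sin\theta-c\cos\theta)$, which vanishes by hypothesis, so $W\perp V$ and $\alpha$ is a helix with axis $V$ in the sense of Definition \ref{helix}.

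The only delicate point, and what I would flag as the main potential obstacle, is the sign ambiguity in the parametrization $\langle N_{\alpha},V\rangle=\cos\phi\sin\theta,\ \langle B_{\alpha},V\rangle=-\cos\phi\cos\theta$, together with the possibility that $\cos\phi$ vanishes at isolated points, where the angle $\theta$ becomes ill-defined. The sign issue is absorbed by a $\theta\mapsto\theta+\pi$ translation and can be chosen consistently on each connected component of the open set $\{\cos\phi\ne 0\}$, matching the convention used throughout the normal-helix section; isolated zeros of $\cos\phi$ are then handled by continuity. Once these conventions are fixed, the whole proof reduces to the pair of short calculations above.
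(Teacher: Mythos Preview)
Your argument is correct and follows essentially the same route as the paper: both reduce the statement to the Frenet-frame expansion $V=\sin\phi\,T_\alpha+\cos\phi\,(\sin\theta\,N_\alpha-\cos\theta\,B_\alpha)$ (the paper invokes equation (\ref{TNB-cil2}), you derive it directly), then read off $\langle W,V\rangle=\cos\phi\,(a\tan\phi+b\sin\theta-c\cos\theta)$ and divide by $\cos\phi$. The only cosmetic difference is that the paper asserts $\cos\phi\neq 0$ outright (since $\alpha$ is nonplanar), whereas you allow for isolated zeros and patch by continuity; your handling is if anything slightly more careful, though your remark that $\kappa_\alpha>0$ guarantees regularity of the projected curve $\beta$ only excludes $T_\alpha\equiv\pm V$ on an interval, not pointwise, so it is the same issue in disguise.
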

\begin{proof}
We will follow the same reasoning as for proving Theorem \ref{th3}. Let us first prove the reciprocal part. Let $X(t,z)=\beta(t)+zV$ be the canonical parametrization of $C_{\beta,V}$ and suppose $\al(s)=X(t(s),z(s))$. Although equations (\ref{TNB-cil}) and (\ref{TNB-cil2}) have been obtained for the case where $\theta$ is constant, they also remain valid in the non-constant case, and so the $F$-constant vector field $W=aT_\al+bN_\al+cB_\al$ satisfies $\<V,W\>=0$. Hence $\al$ is a helix with axis $V$ and $F$-constant vector field $W$ orthogonal to it.

Now, let us consider $\al$ a helix with axis $V$, and suppose there is a $F$-constant vector field $W=aT_\al+bN_\al+cB_\al$ satisfying $\<V,W\>=0$. Then $\al$ is contained in the cylinder $C_{\al,V}$, which can be locally parametrized by $X(t,z)=\beta(t)+zV$, where $\beta(t)$ is a plane curve in $C_{\al,V}$ orthogonal to $V$. Then equations (\ref{TNB-cil}) and (\ref{TNB-cil2}) are satisfied, and the condition $\<V,W\>=0$ implies equation (\ref{th4.eq}), since $\cos\phi\neq0$ (otherwise, $\al$ would be a plane curve).
\end{proof}

Following a reasoning similar to the one used to prove Proposition \ref{hn-cil}, we can obtain from Theorem \ref{th4} the following result (which generalizes Proposition \ref{hn-cil} to the case where $\theta$ is not constant). We leave the proof to the reader.

\begin{prop}\label{cg-cil}
Let $\alpha(s)=X(t(s),z(s))$ be an arclength parametrized curve in a cylinder $C_{\beta,V}$. Then $\al$ satisfies equation (\ref{th4.eq}), for a non constant function $\theta\equiv\theta(s)$, if and only if
\begin{align*}
  t'(s)&=\frac{a}{\sqrt{a^2+(-b\sin\theta+c\cos\theta)^2}},\\
  z'(s)&=\frac{-b\sin\theta+c\cos\theta}{\sqrt{a^2+(-b\sin\theta+c\cos\theta)^2}},\\
  \kappa_\beta(t(s))&=-\frac{(b\cos\theta+c\sin\theta)\theta'}{a\tan\theta}.
\end{align*}
\end{prop}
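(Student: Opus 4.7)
The plan is to mimic the proof of Proposition \ref{hn-cil}, relying on the observation---already used in the proof of Theorem \ref{th4}---that equations (\ref{TNB-cil}) and (\ref{TNB-cil2}) remain valid when $\theta$ depends on $s$, with $\theta(s)$ now interpreted pointwise as the angle dictated by the decomposition of $N_\al$ in the orthonormal frame $\{-\sin\phi\,T_\be+\cos\phi\,V,\,N_\be\}$ of the normal plane to $T_\al$ on the cylinder. Writing $\alpha(s)=\beta(t(s))+z(s)V$, the arclength condition forces $t'(s)^2+z'(s)^2=1$, so I introduce $\phi(s)$ by $t'(s)=\cos\phi(s)$, $z'(s)=\sin\phi(s)$, consistent with $\sin\phi=\<T_\al,V\>$ in Theorem \ref{th4}.

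For the direct implication, rewrite equation (\ref{th4.eq}) as $a\tan\phi=c\cos\theta-b\sin\theta$; the identity $1+\tan^2\phi=\sec^2\phi$ then immediately yields
\[
\cos\phi=\frac{a}{\sqrt{a^2+(-b\sin\theta+c\cos\theta)^2}},\quad \sin\phi=\frac{-b\sin\theta+c\cos\theta}{\sqrt{a^2+(-b\sin\theta+c\cos\theta)^2}},
\]
which are the first two formulas of the proposition. For the third formula, I would differentiate $T_\al=\cos\phi\,T_\be+\sin\phi\,V$, use the Frenet equation $T_\be'=\kappa_\be N_\be$, and arrive at
\[
\kappa_\al N_\al=\phi'(-\sin\phi\,T_\be+\cos\phi\,V)+\cos^2\phi\,\kappa_\be\,N_\be.
\]
Comparing this with the decomposition $N_\al=\sin\theta(-\sin\phi T_\be+\cos\phi V)+\cos\theta N_\be$ provided by (\ref{TNB-cil})(b), and matching coefficients in the orthonormal pair $\{-\sin\phi T_\be+\cos\phi V,\,N_\be\}$, gives $\phi'=\kappa_\al\sin\theta$ and $\cos^2\phi\,\kappa_\be=\kappa_\al\cos\theta$; dividing these produces the key identity $\phi'=\tan\theta\cos^2\phi\,\kappa_\be$. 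Finally, differentiating equation (\ref{th4.eq}) with respect to $s$ yields $a\sec^2\phi\cdot\phi'+(b\cos\theta+c\sin\theta)\theta'=0$, and eliminating $\phi'$ via the previous identity cancels $\cos^2\phi$ and delivers $\kappa_\be(t(s))=-(b\cos\theta+c\sin\theta)\theta'/(a\tan\theta)$.

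The converse follows by reversing these manipulations. From the first two formulas one reads off $\tan\phi=z'/t'=(-b\sin\theta+c\cos\theta)/a$, which is exactly equation (\ref{th4.eq}); the third formula is then precisely the compatibility condition obtained from $\phi'=\tan\theta\cos^2\phi\,\kappa_\be$, so the system is consistent with the Frenet apparatus of $\al$ inherited from $C_{\be,V}$.

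The main obstacle is carefully justifying the identity $\phi'=\tan\theta\cos^2\phi\,\kappa_\be$ when $\theta$ varies with $s$: the point is that, at each $s$, the unit vector $N_\al$ belongs to the plane orthogonal to $T_\al$ in $\R3$, and this plane is spanned by the orthonormal pair $\{-\sin\phi T_\be+\cos\phi V,\,N_\be\}$, so (\ref{TNB-cil})(b) is valid instantaneously with the function $\theta=\theta(s)$ defined by that decomposition; once this is granted, the coefficient comparison proceeds exactly as in the constant-$\theta$ case.
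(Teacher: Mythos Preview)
The paper does not actually prove this proposition---it explicitly states ``We leave the proof to the reader,'' merely pointing to ``a reasoning similar to the one used to prove Proposition~\ref{hn-cil}.'' Your proposal carries out precisely that suggested reasoning, and the argument is correct: the derivation of the first two formulas from (\ref{th4.eq}) via $1+\tan^2\phi=\sec^2\phi$, the key relation $\phi'=\tan\theta\cos^2\phi\,\kappa_\beta$ obtained by matching coefficients of $\kappa_\al N_\al$ against (\ref{TNB-cil})(b), and the elimination step after differentiating (\ref{th4.eq}) all go through exactly as you describe.
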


\section*{Acknowledgement}

This research is part of the grant PID2021-124157NB-I00, funded by MCIN/ AEI/ 10.13039/ 501100011033/ ``ERDF A way of making Europe''. Also supported by ``Ayudas a proyectos para el desarrollo de investigación científica y técnica por grupos competitivos'', included in the ``Programa Regional de Fomento de la Investigación Científica y Técnica (Plan de Actuación 2022)'' of the Fundación Séneca-Agencia de Ciencia y Tecnología de la Región de Murcia, Ref. 21899/PI/22.


\begin{thebibliography}{99}
\small
\baselineskip=12pt

\bibitem{Chen03}
Chen BY.
When does the position vector of a space curve always lie in its rectifying plane?
American Mathematical Monthly 2003; 110: 147--152.
doi: 10.2307/3647775

\bibitem{IT04}
Izumiya S, Takeuchi N.
New special curves and developable surfaces.
Turkish Journal of Mathematcs 2004; 28: 153--163.
https://journals.tubitak.gov.tr/math/vol28/iss2/6/

\bibitem{Kreyszig}
Kreyszig E.
Differential Geometry. Toronto, Canada: University of Toronto Press, 1959.

\bibitem{LO16a}
Lucas P, Ortega-Yagües JA.
Slant helices in the Euclidean 3-space revisited.
Bulletin of the Belgian Mathematical Society - Simon Stevin 2016; 23: 133--150.
doi: 10.36045/bbms/1457560859

\bibitem{Santalo67}
Santaló L.
On the converse of a theorem of Jacobi relative to space curves.
Universidad Nacional de Tucumán, Facultad de Ciencias Exactas y Tecnología, Revista Serie A, Matemática y Física Teórica 1967; 17: 83--89 (in Spanish).

\bibitem{SJ}
Schaub H, Junkins JL.
Analytical Mechanics of Space Systems, 2ed. Virginia, USA: American Institute of Aeronautics \& Astronautics, 2009.

\bibitem{Struik61}
Struik DJ.
Lectures on Classical Differential Geometry, 2ed.
New York, USA: Dover Publ., Inc.,  1961.

\end{thebibliography}
\end{document}